\newtheorem{thm}{Theorem}[section]
\newtheorem{cor}[thm]{Corollary}
\newtheorem{prop}[thm]{Proposition}
\newtheorem{lem}[thm]{Lemma}
\theoremstyle{definition}
\newtheorem{defn}[thm]{Definition}
\newtheorem{rem}[thm]{Remark}
\newcommand{\mf}[1]{{\mathfrak{#1}}}
\newcommand{\mb}[1]{{\mathbf{#1}}}
\begin{document}

\title{Strongly Koszul edge rings}
\author{Takayuki Hibi}
\address[Takayuki Hibi]{Department of Pure and Applied Mathematics, Graduate School of Information Science and Technology, Osaka University, 
 Toyonaka, Osaka 560-0043, Japan}
\email{hibi@math.sci.osaka-u.ac.jp}
 
\author{Kazunori Matsuda}
\address[Kazunori Matsuda]{Department of Mathematics, College of Science, Rikkyo University, 
 Toshima-ku, Tokyo 171-8501, Japan}
\email{matsuda@rikkyo.ac.jp}
\author{Hidefumi Ohsugi}
\address[Hidefumi Ohsugi]{Department of Mathematics, College of Science, Rikkyo University, 
 Toshima-ku, Tokyo 171-8501, Japan}
\email{ohsugi@rikkyo.ac.jp}

\subjclass[2010]{13P20, 16S37}
\date{}
\keywords{strongly Koszul algebra, finite graph, edge ring}


\begin{abstract}
We classify the finite connected simple graphs whose edge rings are strongly Koszul.
From the classification, it follows that if the edge ring is strongly Koszul, then its toric ideal
possesses a quadratic Gr\"obner basis.
\end{abstract}

\maketitle


\section*{Introduction}
Edge rings, edge polytopes as well as toric ideals arising from finite 
simple graphs (\cite{normal}, \cite{quadgene}, \cite{Koszulbipartite})
are fashionable in the current trend of combinatorics and commutative algebra.
The purpose of the present paper is to classify the finite connected simple graphs $G$ 
for which the edge ring $K[G]$ of $G$ is strongly Koszul (Theorem \ref{main}).
From the classification, it follows that if $K[G]$ is strongly Koszul, then its toric ideal $I_{G}$
possesses a quadratic Gr\"obner basis (Corollary \ref{gbasis}).
It is unclear whether the toric ideal of a strongly Koszul toric ring possesses
a quadratic Gr\"obner basis.

It seems that the class of strongly Koszul toric rings is rather small.
In fact, 
the classification of strongly Koszul algebras has been achieved for 
(i)
edge rings of bipartite graphs (\cite[Theorem 4.5]{HeHiR}),
(ii)
toric rings arising from finite distributive lattices (\cite[Theorem 3.2]{HeHiR}),
and (iii) 
toric rings of stable set polytopes (\cite[Theorem 5.1]{M}).
We refer the reader to, e.g., \cite{BHeV} and \cite{Conca}
for the background of Koszul algebras.


\section{Strongly Koszul algebra}

Let $K$ be a field
and let $\mf{m} = R_{+}$ be the homogeneous maximal ideal of 
a graded $K$-algebra $R$. 

\begin{defn}[\cite{HeHiR}]
A graded $K$-algebra $R$ is said to be {\em strongly Koszul} if $\mf{m}$ has a minimal system of generators $\{u_{1}, \ldots, u_{t}\}$
which satisfies the following condition: 

\begin{quote}
For all subsequences $u_{i_{1}}, \ldots, u_{i_{r}}$ of $\{u_{1}, \ldots, u_{t}\}$ $(i_{1} \le \cdots \le i_{r})$ and for all $j = 1, \ldots, r - 1$, 
$(u_{i_{1}}, \ldots, u_{i_{j - 1}}) : u_{i_{j}}$ is generated by a subset of elements of $\{u_{1}, \ldots, u_{t}\}$.
\end{quote}
\end{defn}

A graded $K$-algebra $R$ is called {\em Koszul}
 if $K = R/\mf{m}$ has a linear resolution.
By the following proposition (\cite[Theorem 1.2]{HeHiR}),
we can see that a strongly Koszul algebra is Koszul. 

\begin{prop}
If $R$ is strongly Koszul with respect to the minimal homogeneous generators $\{u_{1}, \ldots, u_{t}\}$ of $\mf{m} = R_{+}$, 
then for all subsequences $\{u_{i_{1}}, \ldots, u_{i_{r}}\}$ of $\{u_{1}, \ldots, u_{t}\}$, $R/(u_{i_{1}}, \ldots, u_{i_{r}})$ has a linear resolution.
\end{prop}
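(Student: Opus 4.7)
The plan is to prove the statement by induction on $r$, using the short exact sequence associated to adjoining the last generator $u_{i_{r}}$. When $r = 0$ the module is $R$ itself and has a trivial linear resolution. For the inductive step, write $I = (u_{i_{1}}, \ldots, u_{i_{r-1}})$ and $J = (u_{i_{1}}, \ldots, u_{i_{r}})$, and set $L = I : u_{i_{r}}$. By the strong Koszul property, $L$ is generated by a subset $\{u_{k_{1}}, \ldots, u_{k_{s}}\}$ of the fixed minimal generators of $\mf{m}$, so $L$ itself has the shape covered by the conclusion.

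The technical tool is the short exact sequence of graded $R$-modules
$$
0 \longrightarrow (R/L)(-1) \xrightarrow{\ \cdot u_{i_{r}}\ } R/I \longrightarrow R/J \longrightarrow 0
$$
and the induced long exact sequence in $\Tor_{\bullet}^{R}(-, K)$, where $K = R/\mf{m}$. A graded module $M$ generated in degree $0$ has a linear resolution exactly when $\Tor_{p}^{R}(M, K)$ is concentrated in internal degree $p$ for every $p \ge 0$. Granted linear resolutions for $R/I$ and $R/L$, the shift by $-1$ places $\Tor_{p}^{R}((R/L)(-1), K)$ in internal degree $p+1$; a degree-by-degree inspection of the long exact sequence then forces $\Tor_{p}^{R}(R/J, K)$ to vanish outside internal degree $p$, which is precisely the linearity of the resolution of $R/J$.

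The main obstacle is providing the inductive hypothesis for $R/L$, because the subset $\{u_{k_{1}}, \ldots, u_{k_{s}}\}$ generating $L$ need not satisfy $s < r$, so a plain induction on the number of generators does not close. The remedy is to strengthen the induction: one orders subsequences by a lexicographic invariant such as $(i_{r}, r)$ (or an equivalent combinatorial measure) and verifies, using the strong Koszul colon condition to pick a canonical subset generating $L$, that passing from $I$ to $L$ strictly decreases this invariant. Establishing that such an invariant decreases---equivalently, that the subset generating $L$ can be arranged to involve only indices controlled by the initial subsequence---is the delicate combinatorial point, after which the $\Tor$ computation above finishes the argument.
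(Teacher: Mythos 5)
The paper does not prove this proposition; it is quoted verbatim from Herzog--Hibi--Restuccia \cite[Theorem 1.2]{HeHiR}, so the comparison below is with the standard argument in that source. Your skeleton is the right one and is the same as theirs: the short exact sequence $0 \to (R/L)(-1) \to R/I \to R/J \to 0$ with $L = I : u_{i_r}$, the fact that strong Koszulness makes $L$ again an ideal generated by a subset of the $u_i$'s, and the characterization of linearity via the internal degrees of $\Tor_p^R(-,K)$. The degree bookkeeping in the long exact sequence is also correct.

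The genuine gap is exactly the point you flag and then wave at: how to close the induction given that $L$ may be generated by a larger and uncontrolled subset of $\{u_1,\ldots,u_t\}$. Your proposed remedy --- a lexicographic invariant such as $(i_r,r)$ that strictly decreases when passing from $I$ to $L$ --- does not work: the strong Koszul condition only asserts that $L$ is generated by \emph{some} subset of the $u_i$'s, with no control whatsoever on which indices occur, so there is no reason the generating subset of $L$ ``involves only indices controlled by the initial subsequence,'' and in concrete examples (e.g.\ edge rings) it does not. The correct way to close the argument requires no such control: run a double induction whose \emph{outer} loop is on the homological degree $p$ and whose inner loop is on $r$. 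In the segment
\[
\Tor_p^R(R/I,K)_n \longrightarrow \Tor_p^R(R/J,K)_n \longrightarrow \Tor_{p-1}^R\bigl((R/L)(-1),K\bigr)_n = \Tor_{p-1}^R(R/L,K)_{n-1},
\]
the left term is handled by the inner induction on the number of generators (it involves $r-1$ of them), while the right term involves $R/L$ only in homological degree $p-1$, so it is covered by the outer induction on $p$ applied to \emph{all} ideals generated by subsets of the $u_i$'s, regardless of how many generators $L$ has. With the base cases $p=0$ (where $\Tor_0 = K$) and $r=0$ (where the module is $R$), both terms vanish for $n \neq p$, forcing $\Tor_p^R(R/J,K)_n = 0$ for $n \neq p$. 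Replacing your invariant with this double induction turns your outline into a complete proof.
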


Let $R= K[t_1,\ldots,t_n]$ be the polynomial ring in $n$ variables over $K$.
Let $A$ be a homogeneous affine semigroup ring generated by monomials belonging to
$R$.
If $T$ is a nonempty subset of $[n] = \{1,\ldots, n\}$, then we write 
$R_T$ for the polynomial
ring $K[{t_j : j \in T}]$ with the restricted variables. 
A subring of $A$ of the form $A \cap R_T$
with $\emptyset \neq T \subset [n]$
is called a {\em combinatorial pure subring} of $A$.
See \cite{OHeHi} for details.
The following is known to be true \cite[Corollary 1.6]{OHeHi}.

\begin{prop}
\label{cpurecor}
Let $A$ be a homogeneous affine semigroup ring generated by monomials belonging to
$R$
and let $A \cap R_T$ be a combinatorial pure subring of $A$.
If $A$ is strongly Koszul, then so is $A \cap R_T$.
\end{prop}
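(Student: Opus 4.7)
The plan is to exploit combinatorial purity of $B := A \cap R_{T}$ inside $A$ via a $B$-linear projection $\pi \colon A \to B$ that kills every monomial of $A$ not in $R_{T}$, and then to transport the strongly Koszul condition through this retraction. I would first fix a minimal monomial generating set $\{u_{1}, \ldots, u_{t}\}$ of $\mf{m}_{A}$ witnessing strong Koszulness for $A$, and let $\{u_{j_{1}}, \ldots, u_{j_{k}}\}$, with $j_{1} < \cdots < j_{k}$, be the sub-collection of those generators that lie in $R_{T}$. Because any monomial factorization $u = vw$ in $R$ with $u \in R_{T}$ forces both $v$ and $w$ into $R_{T}$, semigroup-indecomposability in $A$ agrees with semigroup-indecomposability in $B$ for monomials of $B$. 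Hence $\{u_{j_{1}}, \ldots, u_{j_{k}}\}$, endowed with the induced ordering, is a minimal homogeneous generating set of $\mf{m}_{B}$, and this is the candidate set with which to verify strong Koszulness of $B$.

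Next I would check that $\pi$ is $B$-linear and restricts to the identity on $B$; the same factorization observation makes both straightforward and exhibits $B$ as a $B$-module direct summand of $A$. Purity then yields a contraction formula $(S)_{A} \cap B = (S \cap R_{T})_{B}$ for every $S \subset \{u_{1}, \ldots, u_{t}\}$, proved by applying $\pi$ to any expression $\sum_{s \in S} a_{s} s$ representing an element of the left-hand side. A routine colon manipulation based on the same purity gives, for every increasing subsequence $j_{a_{1}} < \cdots < j_{a_{r}}$ drawn from $\{j_{1}, \ldots, j_{k}\}$ and every $p$ with $1 \le p < r$,
\[
(u_{j_{a_{1}}}, \ldots, u_{j_{a_{p - 1}}})_{B} : u_{j_{a_{p}}} \;=\; \bigl((u_{j_{a_{1}}}, \ldots, u_{j_{a_{p - 1}}})_{A} : u_{j_{a_{p}}}\bigr) \cap B.
\]
Applying strong Koszulness of $A$ to the same subsequence, now viewed inside $\{u_{1}, \ldots, u_{t}\}$, produces a subset $S \subset \{u_{1}, \ldots, u_{t}\}$ with the $A$-colon on the right equal to $(S)_{A}$. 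The contraction formula then identifies the $B$-colon as $(S \cap R_{T})_{B}$, which is generated by a subset of $\{u_{j_{1}}, \ldots, u_{j_{k}}\}$, confirming the strong Koszul condition for $B$.

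The main obstacle is not conceptually deep but lies in the careful handling of the purity tools: the $B$-linearity of $\pi$, the contraction identity $(S)_{A} \cap B = (S \cap R_{T})_{B}$, and the identification of $\mf{m}_{B}$'s minimal generators. Each reduces to the single observation that an $A$-monomial multiplied by a $B$-monomial lies in $R_{T}$ exactly when the $A$-monomial itself does; once this is in hand, the strong Koszul property descends from $A$ to $B$ simply by restricting the witnessing generating set along the induced ordering.
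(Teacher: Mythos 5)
Your argument is correct: the key observation that a monomial of $R_T$ can only factor through monomials of $R_T$ does yield that $\{u_{j_1},\ldots,u_{j_k}\}$ minimally generates $\mf{m}_B$, that $\pi$ is a $B$-linear (indeed ring) retraction, and hence the contraction identities $(S)_A\cap B=(S\cap R_T)_B$ and $(I_B:u)=(I_A:u)\cap B$ that transport the strongly Koszul condition down to $B=A\cap R_T$. The paper itself gives no proof, citing \cite[Corollary 1.6]{OHeHi}, and your retraction/direct-summand argument is essentially the one used there, so there is nothing further to add.
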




This fact is very useful.
For example,
we can determine when the $d$-th squarefree Veronese subring $R_{n, d}$ of
polynomial ring $K[t_1,\ldots,t_n]$ is strongly Koszul.
Note that $R_{n,2}$ is the edge ring of the complete graph $K_n$ of $n$ vertices
(which will be defined later).

\begin{prop}\label{squarefreeVeronese}
Let $2 \leq d < n$ be integers.
Then the following conditions are equivalent:
\begin{enumerate}
 \item[{\rm (i)}] $R_{n, d}$ is strongly Koszul. 
 \item[{\rm (ii)}] Either $(n, d) = (4, 2)$ or $n = d + 1$. 
\end{enumerate}
\end{prop}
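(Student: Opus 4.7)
The plan is to establish both directions of the equivalence, with the main work reducing via Proposition \ref{cpurecor} to a single computation in $R_{5,2}$.

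For the direction (ii) $\Rightarrow$ (i), I would treat the two cases separately. When $n = d+1$, the bijection $T \mapsto [n] \setminus T$ between $d$-subsets and $1$-subsets of $[n]$ induces a $K$-algebra isomorphism $R_{n,d} \cong R_{n,n-d}$ of homogeneous affine semigroup rings, so $R_{d+1,d} \cong R_{d+1,1} = K[t_1,\ldots,t_{d+1}]$ is a polynomial ring and hence trivially strongly Koszul. For $(n,d) = (4,2)$, I would fix an explicit ordering of the six generators $t_i t_j$, $1 \le i < j \le 4$, of $R_{4,2}$ (for instance the lexicographic one) and verify the strongly Koszul condition by a direct finite check of all relevant colon ideals.

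For (i) $\Rightarrow$ (ii) I argue the contrapositive. Assume $d \ge 2$, $n \ge d+2$, and $(n,d) \ne (4,2)$; the goal is to exhibit a combinatorial pure subring of $R_{n,d}$ that fails to be strongly Koszul, and then to invoke Proposition \ref{cpurecor}. The reduction has two subcases. If $d = 2$ then $n \ge 5$, and choosing any $T \subset [n]$ with $|T| = 5$ realises $R_{5,2}$ as $R_{n,2} \cap R_T$. If $d \ge 3$, choosing $T$ with $|T| = d+2$ gives the pure subring $R_{d+2,d}$, which by the complementation isomorphism is isomorphic to $R_{d+2,2}$; since $d+2 \ge 5$, this in turn contains $R_{5,2}$ as a pure subring. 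In either subcase the problem reduces to showing that $R_{5,2}$ is not strongly Koszul.

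The final step is to prove that $R_{5,2}$ is not strongly Koszul. Writing $e_{ij} := t_i t_j$, suppose for contradiction that some ordering of the ten generators satisfies the condition. Consider the two disjoint edges $e_{12}$ and $e_{34}$, and let $e$ be whichever of them is earlier in the ordering and $e'$ the other. The strongly Koszul condition applied to the subsequence $(e, e')$ forces $(e) : e'$ to be generated by a subset of the $e_{ij}$. I would verify: (a) the only generator $e_{ij}$ lying in $(e_{12}) : e_{34}$ is $e_{12}$ itself, since $e_{ij} e_{34}/e_{12} \in R_{5,2}$ forces $\{1,2\} \subset \{i,j\}$, and symmetrically for $(e_{34}):e_{12}$; (b) the element $e_{15} e_{25}$ lies in $(e_{12}) : e_{34}$ because $e_{15} e_{25} \cdot e_{34} = e_{12} \cdot e_{35} e_{45}$, and symmetrically $e_{35} e_{45} \in (e_{34}) : e_{12}$; (c) neither of these witnesses lies in $(e_{12})R_{5,2}$ or $(e_{34})R_{5,2}$ respectively, because dividing by $e_{12}$ or $e_{34}$ gives $t_5^2$, which is not a monomial of $R_{5,2}$ (its exponent vector violates the multigraph realisability condition on degree sequences). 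This forces the colon ideal to be strictly larger than any ideal generated by the generators it contains, contradicting the strongly Koszul hypothesis. I expect the main obstacle to be the direct SK verification for $R_{4,2}$: while purely finite, it requires careful inspection of colon ideals and cannot be bypassed by the pure subring trick, whereas once the witness $e_{15} e_{25}$ is identified the negative direction is short.
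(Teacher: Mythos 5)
Your proposal is correct in substance but organizes the proof differently from the paper, most notably in the negative direction. The paper disposes of the whole case $d=2$ by citing the known fact that $R_{n,2}$ is strongly Koszul if and only if $n\le 4$ (\cite[Example 1.6 (3)]{HeHiR}), and for $d\ge 3$ it splits into $n\ge d+3$ (exhibiting a combinatorial pure subring generated by $\{t_1t_it_j\}$, isomorphic to a smaller squarefree Veronese) and $n=d+2$ (complementation $R_{d+2,d}\simeq R_{d+2,2}$). You instead reduce all failing cases uniformly to $R_{5,2}$ --- for $d\ge 3$ by first restricting to $d+2$ variables, then complementing, then restricting again --- and you prove from scratch that $R_{5,2}$ is not strongly Koszul via the witness $e_{15}e_{25}\in\bigl((e_{12}):e_{34}\bigr)\setminus(e_{12})$, whose quotient $t_5^2$ is not squarefree. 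That computation is correct (it is exactly the ``two triangles meeting in one vertex'' obstruction that reappears in the paper's Lemma \ref{cycle} (ii), applied to the triangles $\{1,2,5\}$ and $\{3,4,5\}$ of $K_5$), and your handling of both possible orderings of $e_{12},e_{34}$ is the right way to use the definition. What your route buys is self-containedness on the negative side and a cleaner reduction that avoids the paper's appeal to a pure subring not literally of the form $A\cap R_T$; what it costs is that the identification $R_{n,d}\cap R_T\simeq R_{|T|,d}$ silently uses the normality of hypersimplices (the paper's identifications are equally terse, so this is a matter of degree).

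The one real incompleteness is the positive case $(n,d)=(4,2)$: you defer it to an unexecuted finite check of all colon ideals among the six generators of $R_{4,2}$, whereas the paper again cites \cite[Example 1.6 (3)]{HeHiR}. The check does succeed (in $K_4$ the analogous witness $e_{13}e_{24}$ equals $e_{12}e_{34}$ and so causes no obstruction, there being no fifth vertex), but as written your argument for this case is a plan rather than a proof; either carry out the verification or cite the reference as the paper does.
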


\begin{proof}
It is known \cite[Example 1.6 (3)]{HeHiR} that 
$R_{n, 2}$ is strongly Koszul if and only if $n \le 4$.
If $n = d + 1$, then $R_{d + 1, d}$ is isomorphic to a polynomial ring. 
In particular, it is strongly Koszul.

If $(n, d) = (6, 3)$, then the semigroup ring 
$A$ generated by $\{t_1 t_i t_j : 2 \leq i < j \leq 6 \}$
over $K$ is a combinatorial pure subring 
of $R_{6, 3}$.
Since $A$ is isomorphic to $R_{5, 2}$,
$R_{6,3}$ is not strongly Koszul.
By a similar argument, $R_{n,d}$ with $n \ge d + 3$ is not strongly Koszul.
In general, it is known that $R_{n,d} \simeq R_{n,n-d}$.
If $d \ge 3$ and $n = d + 2$, then
$R_{d+2,d} \simeq R_{d+2,2}$  is not strongly Koszul
since $d+2 \geq 5$.
\end{proof}

We call a semigroup ring $A$ {\it trivial}\ \ 
if, starting with polynomial
rings, $A$ is obtained by repeated applications of Segre products and tensor
products.
If $A$ is trivial, then it is strongly Koszul.
See \cite{HeHiR}.

\begin{rem}
\label{remark}
We note that $R_{4,2}$ is a 
{\em non-trivial} strongly Koszul complete intersection semigroup ring. 
Indeed, if we assume that $R_{4, 2}$ is trivial,
then there exists a 3-poset $P$ such that 
the Hibi ring (see \cite{Hib}) $\mathcal{R}_{K}[P]$ is isomorphic to $R_{4, 2}$ 
since all of the trivial rings can be constructed as Hibi rings. 
However, there exists no 3-poset $P$ such that $\mathcal{R}_{K}[P]$ is 
a complete intersection and its embedding dimension is $6$.
Hence, $R_{4,2}$ is not trivial. 
\end{rem}



\section{Strongly Koszul edge rings}

Let $G$ be a finite connected simple graph on the vertex set
$V(G) =[n] = \{ 1, 2, \ldots, n \}$.
Let $E(G) = \{ e_{1}, \ldots, e_{d} \}$ be its edge set. 
Recall that a finite graph is {\em simple} if it possesses no loops or multiple edges.
Let $K[X] = K[X_{1}, \ldots, X_{n}]$ be the polynomial ring in $n$ variables
over a field $K$. If $e = \{i, j\} \in E(G)$, then $X^{e}$ stands for the quadratic monomial
$X_{i} X_{j} \in K[X]$. 
The {\em edge ring} of $G$ is the subring 
$K[G] = K[X^{e_{1}}, \ldots, X^{e_{d}}]$ of $K[X]$.
Let $K[Y] = K[Y_{1}, \ldots, Y_{d}]$ denote the polynomial ring in $d$ variables over $K$
with each $\deg Y_{i} = 1$
and define the surjective ring homomorphism $\pi : K[Y] \to K[G]$ by setting
$\pi(Y_{i}) = X^{e_{i}}$ for each $1 \leq i \leq d$.
The {\em toric ideal} $I_{G}$ of $G$ is the kernel of $\pi$.
It is known \cite[Corollary 4.3]{Stu} that $I_{G}$ is generated by binomials
of the form $u - v$, where $u$ and $v$ are monomials of $K[Y]$ with
$\deg u = \deg v$, such that $\pi(u) = \pi(v)$.
In this section, we determine graphs $G$ such that
$K[G]$ is strongly Koszul.

By Proposition \ref{cpurecor}, we have the following fact concerning edge rings.

\begin{cor}
Let $G_{W}$ be an induced subgraph of a graph $G$
on the vertex set $W$.
If $K[G]$ is strongly Koszul, then so is $K[G_{W}]$.
\end{cor}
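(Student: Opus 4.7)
The plan is to deduce this directly from Proposition \ref{cpurecor} by exhibiting $K[G_{W}]$ as a combinatorial pure subring of $K[G]$. Here $K[G]$ is viewed as a subring of the polynomial ring $K[X]=K[X_{1},\ldots,X_{n}]$, so in the notation of the earlier discussion I would take $T=W\subseteq [n]$ and form $K[X]_{W}=K[X_{i}:i\in W]$.

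The key identification to verify is
\[
K[G]\cap K[X]_{W}=K[G_{W}].
\]
The inclusion $\supseteq$ is obvious, since each generator $X^{e}$ with $e\in E(G_{W})$ is a monomial in $K[G]$ whose support lies in $W$. For the reverse inclusion, I would argue on the semigroup level: $K[G]$ has a $K$-basis of monomials of the form $\prod_{j}(X^{e_{j}})^{a_{j}}$, and the exponent of a variable $X_{i}$ in such a monomial equals $\sum_{j:\,i\in e_{j}}a_{j}$. If the monomial lies in $K[X]_{W}$, then for every $i\notin W$ this sum vanishes, which forces $a_{j}=0$ for every edge $e_{j}$ meeting $[n]\setminus W$. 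Hence the only contributing edges $e_{j}$ have both endpoints in $W$, i.e.\ belong to $E(G_{W})$, so the monomial lies in $K[G_{W}]$.

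Once this identification is in hand, the corollary is immediate: Proposition \ref{cpurecor} says that combinatorial pure subrings inherit the strongly Koszul property, so strong Koszulness of $K[G]$ transfers to $K[G_{W}]$.

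I do not expect a serious obstacle here; the entire content of the argument is the squarefree, degree-two nature of the generators $X^{e}$, which makes the intersection $K[G]\cap K[X]_{W}$ transparent to describe. The only mild point to be careful about is that different factorizations of a single monomial in $K[G]$ need not use the same edges, but the exponent calculation above is intrinsic to the monomial itself and so does not depend on any chosen factorization.
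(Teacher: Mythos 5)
Your proposal is correct and is exactly the argument the paper intends: the paper derives this corollary directly from Proposition \ref{cpurecor} by regarding $K[G_W]$ as the combinatorial pure subring $K[G]\cap K[X]_W$, and your verification of the identification $K[G]\cap K[X]_W=K[G_W]$ (using nonnegativity of the exponents to rule out edges meeting $[n]\setminus W$) supplies the details the paper leaves implicit.
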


Let $G$ be a connected simple graph.
Then $G$ is said to be {\em $2$-connected} if $G_{[n] \setminus v}$ is also connected for all $v \in [n]$. 
Maximal $2$-connected subgraphs of $G$ are called {\em $2$-connected component} of $G$. 
%
If $G$ is bipartite, then the following characterization is known
\cite[Theorem 4.5]{HeHiR}:

\begin{prop}
\label{bpt}
Let $G$ be a connected simple bipartite graph. 
Then $K[G]$ is strongly Koszul if and only if 
any $2$-connected component of $G$ is complete bipartite.
\end{prop}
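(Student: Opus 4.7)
My plan is to prove the two directions separately: necessity by passing to an induced cycle subgraph, and sufficiency by induction on the block decomposition of $G$.

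For the necessity direction, suppose $K[G]$ is strongly Koszul but some $2$-connected component $H$ of $G$ fails to be complete bipartite. Since blocks of size $\geq 3$ are induced subgraphs (two edges lie in the same block iff they lie on a common cycle), Proposition \ref{cpurecor} applied to an induced subgraph gives that $K[H]$ is strongly Koszul. I would first establish the graph-theoretic claim that every $2$-connected bipartite graph which fails to be complete bipartite contains an induced cycle of length at least $6$: choose $u$ and $v$ in opposite color classes with $\{u,v\} \notin E(H)$, use $2$-connectivity to produce a chordless cycle through both, and observe that bipartiteness forces its length to be even and at least $6$. Applying the combinatorial-pure-subring principle a second time, $K[C_{2k}]$ would be strongly Koszul for some $k \geq 3$. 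A dimension count shows $\dim K[C_{2k}] = 2k-1$, so $I_{C_{2k}}$ is principal, generated by the binomial $Y_{1}Y_{3}\cdots Y_{2k-1} - Y_{2}Y_{4}\cdots Y_{2k}$ of degree $k \geq 3$. Hence $K[C_{2k}]$ is not defined by quadrics, contradicting the implication that strongly Koszul rings are Koszul.

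For the sufficiency direction, the base case is $G = K_{a,b}$: here $K[G]$ is the Segre product of the polynomial rings $K[x_{1},\ldots,x_{a}]$ and $K[y_{1},\ldots,y_{b}]$, a trivial semigroup ring in the sense of the discussion preceding Remark \ref{remark}, and hence strongly Koszul. The general case is handled by induction on the number of blocks. Writing $G = G' \cup B$ where $B$ is a complete bipartite leaf block of the block-cut tree meeting $G'$ at a single cut vertex $v$, I would produce an ordering of the edge monomials of $G$ by concatenating orderings of $K[G']$ (inductively satisfying the colon condition) and $K[B]$ (from the base case), organized so that the edges of $B$ containing $v$ are listed before those avoiding $v$. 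The colon condition must then be verified for subsequences crossing the $G'$/$B$ divide; the key structural point is that generators from $G'$ and $B$ share no variables in $K[X]$ except possibly $X_{v}$.

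The main obstacle is precisely this inductive gluing step. Attaching a block at a single cut vertex is neither a Segre product nor a tensor product operation on edge rings, so preservation of strong Koszulness is not automatic and must be established by hand. One must either produce an explicit compatible ordering of all edge monomials and verify the colon condition by case analysis (generators within $G'$, within $B$, or straddling the two via $v$), or develop a fiber-product style principle for homogeneous semigroup rings over the subring $K[X_{v}]$. Controlling the colon ideals $(u_{i_{1}}, \ldots, u_{i_{j-1}}) : u_{i_{j}}$ when the generators involve different blocks meeting only at $v$ is where the real work of the sufficiency direction is concentrated, since one must argue that any monomial appearing in such a colon is either supported entirely in $G'$, supported entirely in $B$, or is divisible by $X_{v}$ times an edge monomial from the opposite side---and in each case the colon is generated by edge monomials of $G$.
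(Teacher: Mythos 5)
Your necessity argument rests on a false graph-theoretic claim. It is not true that every $2$-connected bipartite non-complete-bipartite graph contains an \emph{induced} cycle of length at least $6$. Take $H = K_{3,3}$ minus one edge, say with parts $\{a,b,c\}$, $\{x,y,z\}$ and $\{c,z\} \notin E(H)$: this graph is $2$-connected and not complete bipartite, every cycle through $c$ and $z$ has length $6$, and every $6$-cycle uses all six vertices and therefore carries two of the eight edges as chords. So no chordless cycle through $c$ and $z$ exists, and your reduction to $K[C_{2k}]$ breaks down. The deeper problem is that your intended contradiction (strongly Koszul $\Rightarrow$ Koszul $\Rightarrow$ quadratically generated) is too weak to prove necessity at all: for this same $H$ the toric ideal $I_H$ \emph{is} generated by quadrics (every $6$-cycle has a chord), yet $K[H]$ is not strongly Koszul. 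The obstruction has to be detected by the finer degree bound on intersection ideals $(u) \cap (v)$ from \cite[Corollary 1.5]{HeHiR}, which is exactly how Lemma \ref{cycle}(i) of the paper proceeds: for an even cycle missing an even-chord one exhibits explicit monomials $u, v$ such that $(u)\cap(v)$ has a minimal generator of degree $\deg u + \deg v + 1$. Applied to the $6$-cycle $(c,x,a,z,b,y)$ and its missing even-chord $\{c,z\}$, that argument disposes of $H$; yours does not.

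Your sufficiency direction is, by your own account, not completed: you identify the gluing of a complete bipartite block at a cut vertex as "where the real work is concentrated" but do not carry it out. In fact this step is easier than you anticipate and does not require any colon-ideal case analysis. If $G = G' \cup B$ with $V(G') \cap V(B) = \{v\}$ and both pieces bipartite, then for any monomial of $K[G]$ the total degree contributed by the $B$-edges is read off from the variables of $V(B)$ lying in the colour class opposite to $v$; hence the exponent of $X_v$ splits canonically between the two sides and $K[G] \simeq K[G'] \otimes_K K[B]$. Iterating over the block tree, $K[G]$ is a tensor product of the $K[G_i] \simeq$ Segre products of polynomial rings, i.e.\ a trivial semigroup ring in the sense of Section 1, hence strongly Koszul. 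This is precisely how the ``if'' direction of Theorem \ref{main} treats conditions (i) and (ii). As it stands, both halves of your proposal have genuine gaps, and the necessary one cannot be repaired along the route you chose.
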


On the other hand, 
all complete multipartite graphs $G$ 
such that $K[G]$ is strongly Koszul
are classified in \cite[Proposition 3.6]{multipartite}.

Let $G$ be a graph on the vertex set $[n]$.
A {\em walk} of length $q$ of $G$ connecting $v_1\in V(G)$ and $v_{q+1} \in V(G)$ is 
a finite sequence of the form
\begin{equation}
\label{walk}
\Gamma =
 (
\{v_1,v_2\},
\{v_2,v_3\},
\ldots,
\{v_{q},v_{q+1}\}
),
\end{equation}
where $\{v_k,v_{k+1}\} \in E(G)$ for all $k$.
A walk $\Gamma$ is called a {\em path}
if $v_i \neq v_j$ for all $1 \leq i < j \leq q+1$.
An {\em even} (resp.~{\em odd}) {\em walk} is a walk of even (resp.~odd) length.
A walk $\Gamma$ of the form (\ref{walk}) is called {\em closed} if $v_{q+1} =v_1$. 
A {\em cycle} is a closed walk of the form
\begin{equation}
\label{defofcycle}
C = 
 (
\{v_1,v_2\},
\{v_2,v_3\},
\ldots,
\{v_{q},v_{1}\}
),
\end{equation}
where $q \geq 3$ and $v_i \neq v_j$ for all $1 \leq i < j \leq q$.
A {\em chord} of a cycle (\ref{defofcycle}) is an edge $e \in E(G)$ of the form
$e = \{v_i,v_j\}$ for some $1 \leq i < j \leq q$ with $e \notin E(C)$.
If a cycle (\ref{defofcycle}) is even, 
then an {\em even-chord} (resp.~{\em odd-chord}) of (\ref{defofcycle})
is a chord $e = \{v_i,v_j\}$ with $1 \leq i < j \leq q$ 
such that $j-i$ is odd (resp.~even).
A {\em minimal} cycle of $G$ is a cycle having no chords.
If $C_1$ and $C_2$ are cycles of $G$ having no common vertices,
then a {\em bridge} between $C_1$ and $C_2$ is an edge $\{i,j\}$ of $G$
with $i \in V(C_1)$ and $j \in V(C_2)$.

A graph $G$ is called {\em almost bipartite}
if there exists a vertex $v$ such that
any odd cycle of $G$ contains $v$.
Note that the induced subgraph $G_{[n] \setminus v}$ is bipartite.
Let $V_1 \cup V_2$ be a bipartition of $G_{[n] \setminus v}$.
Then we define the bipartite graph $G(v, V_1,V_2)$
 on the vertex set $[n+1]$ together 
with the edge set
$$ E(G_{[n] \setminus v}) \cup \{ \{i, v\} \in E(G) \ | \ i \in V_1 \} 
\cup \{ \{i, n+1\} \ | \ i \in V_2, \{i, v\} \in E(G) \}.$$
Note that $G$ is obtained from $G(v, V_1,V_2)$ 
by identifying two vertices $v$ and $n+1$. 
In \cite[Proposition 5.5]{csc}, the following is shown.

\begin{prop}
\label{almostbipartite}
Assume the same notation as above.
Then we have $K[G] \simeq K[G(v, V_1,V_2)]$.
\end{prop}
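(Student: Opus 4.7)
The plan is to identify the toric ideals $I_G$ and $I_{G(v, V_1, V_2)}$ after matching up their edge sets via a natural bijection. Write $G' = G(v, V_1, V_2)$ and let $\psi \colon E(G) \to E(G')$ be the bijection that sends each edge $\{i,v\}$ with $i \in V_2$ to $\{i, n+1\}$ and fixes every other edge. Using $\psi$ to identify $K[Y_e : e \in E(G)]$ with $K[Y_e : e \in E(G')]$, the two surjections $\pi_G$ and $\pi_{G'}$ share the same source, and it then suffices to show $I_G = I_{G'}$, as the desired isomorphism $K[G] \simeq K[G']$ follows at once.

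Since both ideals are toric, I need only prove that for arbitrary monomials $Y^u, Y^v$ one has $\pi_G(Y^u) = \pi_G(Y^v)$ if and only if $\pi_{G'}(Y^u) = \pi_{G'}(Y^v)$. One direction is easy: the substitution $\Phi \colon K[X_1, \ldots, X_{n+1}] \to K[X_1, \ldots, X_n]$ sending $X_{n+1} \mapsto X_v$ and fixing the other $X_i$ satisfies $\Phi \circ \pi_{G'} = \pi_G$, which gives $I_{G'} \subseteq I_G$ for free.

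The content lies in the reverse implication. Suppose $\pi_G(Y^u) = \pi_G(Y^v)$. For any vertex $i \neq v, n+1$ the edges of $G'$ incident to $i$ correspond under $\psi$ to the edges of $G$ incident to $i$, so $\deg_{X_i}\pi_{G'}(Y^u) = \deg_{X_i}\pi_{G'}(Y^v)$ automatically. The real task is checking equality of the degrees at $X_v$ and at $X_{n+1}$. Set
\[
A_w := \sum_{i \in V_1,\, \{i,v\}\in E(G)} w_{\{i,v\}}, \qquad B_w := \sum_{i \in V_2,\, \{i,v\}\in E(G)} w_{\{i,v\}},
\]
so that $\deg_{X_v}\pi_{G'}(Y^w) = A_w$ and $\deg_{X_{n+1}}\pi_{G'}(Y^w) = B_w$. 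Since $V_1$ and $V_2$ bipartition $G_{[n] \setminus v}$, summing $\deg_{X_i}\pi_G(Y^u) = \deg_{X_i}\pi_G(Y^v)$ over $i \in V_1$, then over $i \in V_2$, and adjoining equality at $i = v$, produces the system
\begin{align*}
|u|_{V_1, V_2} + A_u &= |v|_{V_1, V_2} + A_v, \\
|u|_{V_1, V_2} + B_u &= |v|_{V_1, V_2} + B_v, \\
A_u + B_u &= A_v + B_v,
\end{align*}
where $|w|_{V_1, V_2} := \sum_{e \in E(G_{[n] \setminus v})} w_e$. This linear system forces $A_u = A_v$ and $B_u = B_v$, completing the argument.

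The main obstacle is recognizing that splitting $v$ into two vertices demands independent control of the $V_1$-edges and $V_2$-edges incident to $v$; once this is spotted, the three degree identities above isolate precisely the information needed, and the rest is bookkeeping.
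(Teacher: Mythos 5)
Your proof is correct. The paper itself does not prove this proposition but merely quotes it from \cite[Proposition 5.5]{csc}, so your argument serves as a self-contained verification. The two key points both check out: (a) the edge-incidence at every vertex $i \neq v, n+1$ is preserved by the bijection $\psi$, so only the degrees at $X_v$ and $X_{n+1}$ are at issue; and (b) the three linear relations obtained by summing the degree equalities over $V_1$, over $V_2$, and at $v$ do force $A_u = A_v$ and $B_u = B_v$ (subtracting the first two gives $B_u - A_u = B_v - A_v$, which together with $A_u + B_u = A_v + B_v$ pins both down). This is exactly where the almost-bipartite hypothesis enters, via the bipartition of $G_{[n]\setminus v}$, and the easy inclusion $I_{G(v,V_1,V_2)} \subseteq I_G$ via the specialization $X_{n+1} \mapsto X_v$ is also handled correctly.
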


In order to prove the main theorem of this paper,
we need the following lemmata.

\begin{lem}\label{cycle}
Suppose that $K[G]$ is strongly Koszul.
Then $G$ satisfies the following:
\begin{enumerate}
 \item[{\rm (i)}] Every even cycle of $G$ has all possible even-chords. 
 \item[{\rm (ii)}] Any two odd cycles of $G$ have at least two common vertices. 
\end{enumerate}
\end{lem}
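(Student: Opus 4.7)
The plan is to prove both (i) and (ii) by contradiction, using the Corollary above (induced subgraphs preserve strongly Koszul) to restrict to a subgraph of $G$ on which the obstruction is transparent. In each case the obstruction will be a monomial that lies in some colon $(X^{g_1},\ldots,X^{g_s}) : X^{h}$ of $K[G]$ but is not a $K[G]$-multiple of any edge monomial belonging to that colon, contradicting the strongly Koszul definition with respect to any chosen ordering of the edges.

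For (i), let $C = v_1 v_2 \cdots v_{2k} v_1$ be an even cycle with $k \ge 3$, and, by cyclic relabeling, assume the absent even-chord is $\{v_1, v_{2\ell}\}$ with $2 \le \ell \le k-1$. Replacing $G$ by $G_{V(C)}$ and writing $e_i = \{v_i, v_{i+1}\}$ (indices mod $2k$), the fundamental cycle identity
\[ X^{e_1} X^{e_3} \cdots X^{e_{2k-1}} \;=\; X^{e_2} X^{e_4} \cdots X^{e_{2k}} \]
places $X^{e_3}X^{e_5}\cdots X^{e_{2k-1}}$ inside the colon $(X^{e_{2\ell}}) : X^{e_1}$. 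A short multiset analysis of the equation $X^{f} X^{e_1} = X^{e_{2\ell}} X^{c}$ (with $X^{c}\in K[G_{V(C)}]$) shows that the only edge monomial $X^{f}$ in this colon is $X^{e_{2\ell}}$ itself. Strong Koszulness therefore forces $X^{e_{2\ell}}$ to divide $X^{e_3} X^{e_5} \cdots X^{e_{2k-1}}$ in $K[G_{V(C)}]$, i.e., the vertex multiset $\{v_3, \ldots, v_{2\ell-1}, v_{2\ell+2}, \ldots, v_{2k}\}$ must partition into edges of $G_{V(C)}$. Running the same argument across all cyclic translates of the absent chord—and, when $G_{V(C)}$ turns out to be bipartite, invoking Proposition \ref{bpt} to force the $2$-connected component containing $C$ to be complete bipartite—I expect to obtain the required contradiction.

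For (ii), suppose $C_1, C_2$ are odd cycles with $|V(C_1)\cap V(C_2)| \le 1$. I would first pass to minimal (chordless) odd sub-cycles within the $C_i$ and, if $V(C_1)\cap V(C_2) = \emptyset$, adjoin a shortest $V(C_1)$--$V(C_2)$ path $P$ in $G$; setting $W = V(C_1)\cup V(C_2)\cup V(P)$ and replacing $G$ by $G_W$, the concatenation of $C_1$, $P$ (traversed there and back), and $C_2$ is an even closed walk $\Gamma$ whose associated binomial $B_\Gamma \in I_{G_W}$ has degree at least $3$. The bowtie of two triangles sharing one vertex is the smallest instance, producing the primitive cubic $Y_1 Y_3 Y_5 - Y_2 Y_4 Y_6$; the heart of the argument is to extend this primitivity to the general case, showing that no quadratic binomial of $I_{G_W}$ divides either term of $B_\Gamma$. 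Once primitivity is established, $I_{G_W}$ fails to be generated in degree $2$, so $K[G_W]$ is not Koszul, and by the Proposition above (strongly Koszul implies Koszul) together with the standard fact that Koszul algebras have quadratic defining ideals, $K[G_W]$ is not strongly Koszul, a contradiction.

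The main obstacle in both parts is to rule out the ``compensating'' effect of incidental chords and bridges in the induced subgraph: in (i), the candidate colon relation can in principle be absorbed by chords other than $\{v_1, v_{2\ell}\}$, so the contradiction requires either iterating over cyclic positions or reducing to the bipartite case via Proposition \ref{bpt}; in (ii), the parallel issue is that chords of the $C_i$ and bridges across $V(C_1)$ and $V(C_2)$ may supply $4$-cycles, and with them quadratic binomials, that could reduce $B_\Gamma$—systematically excluding these is the technically heaviest step.
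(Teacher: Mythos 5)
Your overall strategy---deriving a contradiction from the colon/intersection conditions of strong Koszulness inside a suitable induced subgraph---is the right one, but both halves have gaps that your own hedging already flags, and the tools you chose are too weak to close them. In (i), your multiset analysis is correct: if $K[G]$ is strongly Koszul then $(X^{e_{2\ell}}):X^{e_1}=(X^{e_{2\ell}})$, and membership of $X^{e_3}X^{e_5}\cdots X^{e_{2k-1}}$ forces the multiset $\{v_3,\ldots,v_{2k}\}\setminus\{v_{2\ell},v_{2\ell+1}\}$ to partition into edges. But, as you note, this only produces \emph{some} chord of $C$ (the two arcs have odd cardinalities $2\ell-3$ and $2k-2\ell-1$, so at least one edge of the partition must cross between them), not the specific even-chord $\{v_1,v_{2\ell}\}$. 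Neither proposed repair closes this: cycling the argument still yields chords in unspecified positions, and Proposition \ref{bpt} is not applicable because $G_{V(C)}$ need not be bipartite (odd chords may be present). The paper instead takes $u$ to be the product of the edges $\{2,3\},\{4,5\},\ldots,\{2i-2,2i-1\}$ along one arc and $v$ the product of $\{2i+1,2i+2\},\ldots,\{2k-1,2k\}$ along the other, so that $w=\prod_{j=1}^{2k}X_j$ lies in $(u)\cap(v)$ with $w/uv=X_1X_{2i}\notin K[G]$; the degree bound of \cite[Corollary 1.5]{HeHiR}, that $(u)\cap(v)$ is generated in degree $\le\deg u+\deg v=k-1$, then isolates exactly the missing chord.

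In (ii) the gap is more serious. For two odd cycles meeting in exactly one vertex $a$ you propose to show that the degree $\ge 3$ walk binomial is primitive, hence that $I_{G_W}$ is not quadratically generated; but the induced subgraph on $V(C_1)\cup V(C_2)$ may contain bridges between $V(C_1)\setminus\{a\}$ and $V(C_2)\setminus\{a\}$, in which case $I_{G_W}$ can perfectly well be generated by quadrics, and the ``not Koszul'' contradiction evaporates. The paper's argument is immune to this: with $u,v$ the edge-products along the two cycles, the witness $w$ satisfies $w/uv=X_a^2$, which can never lie in $K[G]$ because $G$ has no loops, no matter which extra edges are present. For disjoint odd cycles the paper does not prove primitivity from scratch either: it cites \cite[Theorem 1.2]{quadgene} to produce two bridges, and then reduces---via part (i) applied to an even cycle of length $\ge 6$---to the one-common-vertex case. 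You would need either to import that theorem, as the paper does, or to carry out in full the case analysis you defer; as written, the ``technically heaviest step'' you identify is precisely the proof.
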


\begin{proof}
(i): The following argument is based on \cite{HeHiR}. 
Assume that an even cycle
$$C = (\{1, 2\}, \{2, 3\}, \ldots, \{2k - 1, 2k\}, \{2k, 1\}) \ \ \ (k \ge 3)$$
of $G$ does not have an even-chord $\{1, 2i\}$
for some $2 \le i \le k - 1$. 
Set 
 \[
 u = \prod_{j = 1}^{i - 1} (X_{2j}X_{2j + 1}) \ \ (\deg u = i - 1), \hspace{3mm}
 v = \prod_{j = i}^{k - 1} (X_{2j + 1}X_{2j + 2}) \ \ (\deg v = k - i). 
 \]
Then we can see that $w = \prod_{j = 1}^{2k} X_{i} \in (u) \cap (v)$ and $w / uv = X_{1}X_{2i} \not\in K[G]$. 
Hence $w$ is a generator of $(u) \cap (v)$ with $\deg(w) = k$, but this contradicts
the fact that
$(u) \cap (v)$ is generated in degree $\le k - 1$ (\cite[Corollary 1.5]{HeHiR}). 
Therefore we obtain the desired conclusion. 
 
 (ii): Firstly, we show that there exists no pair of odd cycles having exactly one common vertex. 
Assume that $C_{1}$ and $C_{2}$ are odd cycles 
 \begin{eqnarray*}
 C_{1} &=& (\{1, 2\}, \{2, 3\}, \ldots, \{2i, 2i + 1\}, \{2i + 1, 1\}), \\
 C_{2} &=& (\{2i +1, 2i + 2\}, \{2i + 2, 2i + 3\}, \ldots, \{2m, 2m + 1\}, \{2m + 1, 2i + 1\})
 \end{eqnarray*} 
 $(1 \le i < m)$ 
 of $G$ such that $V(C_{1}) \cap V(C_{2}) = \{2i + 1\}$. 
 Let
 \[
 u = \prod_{j = 1}^{i} (X_{2j - 1}X_{2j}) \ \ (\deg u = i), \hspace{3mm}
 v = \prod_{j = i + 1}^{m} (X_{2j}X_{2j + 1}) \ \ (\deg v = m - i). 
 \]
 Then we can see that $w = X_{2k + 1} \cdot \prod_{j = 1}^{2m + 1} X_{i} \in (u) \cap (v)$ and $w / uv = X_{2k + 1}^{2} \not\in K[G]$, 
 a contradiction as in the proof of (i). 
 
Next, we show that there exists no pair of odd cycles
having no common vertices. 
 Assume that there exist two minimal odd cycles $C_{1}$ and $C_{2}$ having no common vertices. 
By assumption, $K[G]$ is strongly Koszul, and hence $K[G]$ is Koszul. 
 In particular, the toric ideal $I_G$ is generated by quadratic binomials.
Then by \cite[Theorem 1.2]{quadgene},
there exist two bridges $b_{1}$ and $b_{2}$ between $C_{1}$ and $C_{2}$. 
 
 \vspace{2mm}
 
 \hspace{3mm} Case 1: $b_{1}$ and $b_{2}$ have a common vertex. 
 
 \vspace{2mm}
 
 Set $b_{1} = \{a, b\}$ and $b_{2} = \{a, c\}$ $(a \in V(C_{1})$, $b, c \in V(C_{2}))$. 
 Then there exists a path $b \to c$ in $C_{2}$ such that its length is odd. 
 Denote this path by $b \xrightarrow{odd} c$. 
 Then $C = a \xrightarrow{b_{1}} b \xrightarrow{odd} c \xrightarrow{b_{2}} a$ is an odd cycle and 
 $C$ and $C_{1}$ have exactly one common vertex, a contradiction. 
 
 \vspace{2mm}
 
 \hspace{3mm} Case 2: $b_{1}$ and $b_{2}$ have no common vertices. 
 
 \vspace{2mm}

Let $b_{1} = \{a, b\}$ and $b_{2} = \{c, d\}$ 
where $a , c \in V(C_{1})$ and $b, d \in V(C_{2})$. 
%
 Since $a \xrightarrow{b_{1}} b \xrightarrow{even} d \xrightarrow{b_{2}} c \xrightarrow{even} a$ is
an even cycle
of length $\geq 6$, $\{a, d\} \in E(G)$ from (i). 
 Then the distinct two bridges $\{a, b\}$ and $\{a, d\}$ have a common vertex, a contradiction as 
in Case 1. 
 \end{proof}
 
 \begin{lem}\label{2connectedcomp}
Suppose that $K[G]$ is strongly Koszul.
Then at most one $2$-connected component of $G$ is  not complete bipartite. 
 \end{lem}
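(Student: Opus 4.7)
The plan is to argue by contradiction: suppose two distinct 2-connected components $B_1$ and $B_2$ of $G$ are both not complete bipartite, and derive a contradiction from the material already developed.

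First I would note that each 2-connected component $B_i$ is automatically an induced subgraph of $G$: any edge of $G$ with both endpoints in $V(B_i)$ would enlarge $B_i$ to a larger 2-connected subgraph, contradicting maximality. By the corollary immediately following Proposition \ref{cpurecor} (the induced-subgraph version for edge rings), $K[B_i]$ is therefore strongly Koszul for $i=1,2$.

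Next I would dispose of the bipartite case. If some $B_i$ were bipartite, then, being 2-connected as a standalone graph, it would constitute its own sole 2-connected component, so Proposition \ref{bpt} applied to $B_i$ alone would force $B_i$ to be complete bipartite, contrary to assumption. Hence both $B_1$ and $B_2$ must contain an odd cycle; pick odd cycles $C_i \subseteq B_i$ for $i=1,2$.

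To close the argument, I would invoke Lemma \ref{cycle}(ii) to conclude $|V(C_1)\cap V(C_2)|\ge 2$, and then compare with the classical block-decomposition fact that any two distinct 2-connected components of a connected graph share at most one vertex (a cut vertex), which forces $|V(B_1)\cap V(B_2)|\le 1$. This is the desired contradiction. I do not foresee a serious obstacle: the proof is a clean synthesis of Proposition \ref{bpt} (handling a purely bipartite block on its own), Lemma \ref{cycle}(ii) (handling a pair of odd cycles in different blocks), and the combinatorial pure subring principle for induced subgraphs. The only technical nuisance is verifying that each 2-connected component is an induced subgraph, which is immediate from the maximality built into the definition of block.
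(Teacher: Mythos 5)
Your proof is correct and follows essentially the same route as the paper: both arguments produce an odd cycle in each of the two offending blocks and then play Lemma \ref{cycle}(ii) against the fact that distinct $2$-connected components share at most one vertex. The only cosmetic difference is that you rule out the bipartite case by applying Proposition \ref{bpt} to the block itself (after noting it is an induced subgraph, hence has a strongly Koszul edge ring), whereas the paper cites Lemma \ref{cycle}(i) for the same purpose.
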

 
 \begin{proof}
 Assume that $G$ has two distinct non-complete bipartite 2-connected components $G_{1}$, $G_{2}$. 
Then these are not even bipartite from Lemma \ref{cycle} (i), and hence $G_{1}$
(resp. $G_{2})$ has an odd cycle $C_{1}$
(resp. $C_{2})$. 
By Lemma \ref{cycle} (ii), $C_{1}$ and $C_{2}$ have at least two common vertices. 
This contradicts that $C_{1}$ and $C_{2}$ belong to distinct 2-connected components. 
 \end{proof}
 
 \begin{lem}
 \label{K4itself}
Suppose that $K[G]$ is strongly Koszul.
If $G$ has a $2$-connected component $G^{'}$ containing $K_{4}$, then $G^{'}$
is the complete graph $K_{4}$. 
 \end{lem}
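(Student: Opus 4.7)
The plan is to assume for contradiction that $V(G') \supsetneq V(K_4) = \{a,b,c,d\}$ and derive a contradiction using Lemma~\ref{cycle}. Since a $2$-connected component of $G$ is automatically an induced subgraph (adding an edge of $G$ between two of its vertices preserves $2$-connectedness and hence violates maximality), Proposition~\ref{cpurecor} ensures that $K[G']$ is itself strongly Koszul, so Lemma~\ref{cycle} applies to $G'$.

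The first key step I would establish is a neighborhood bound: no vertex $w \in V(G') \setminus \{a,b,c,d\}$ is adjacent to two vertices of $\{a,b,c,d\}$. The reason is that if $w$ were adjacent to distinct $p, q \in \{a,b,c,d\}$, then the triangle on $\{p, w, q\}$ (which exists because $\{p,q\} \in E(K_4)$) and the triangle on $\{p\} \cup (\{a,b,c,d\} \setminus \{p,q\})$ would be two odd cycles of $G'$ sharing only the vertex $p$, contradicting Lemma~\ref{cycle}(ii).

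Next I would extract an ear of $K_4$ inside $G'$: picking any $w \in V(G') \setminus \{a,b,c,d\}$, the $2$-connectedness of $G'$ and the Fan Lemma (for the target set $\{a,b,c,d\}$) provide two internally vertex-disjoint paths from $w$ ending at distinct vertices of $\{a,b,c,d\}$; truncating each at its first meeting with $\{a,b,c,d\}$ and splicing at $w$ produces a path
\[ Q : a' - u_1 - u_2 - \cdots - u_\ell - b' \]
with $a', b' \in \{a,b,c,d\}$ distinct, $\ell \geq 1$, and $u_1, \ldots, u_\ell \notin \{a,b,c,d\}$. Writing $\{c', d'\} = \{a,b,c,d\} \setminus \{a', b'\}$, I would split on the parity of $\ell$. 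For $\ell$ odd, closing $Q$ with the edge $\{a', b'\} \in E(K_4)$ yields a cycle of odd length $\ell + 2$ whose vertex set meets the odd triangle $a' - c' - d' - a'$ only in $\{a'\}$, violating Lemma~\ref{cycle}(ii). For $\ell$ even (so $\ell \geq 2$), I would look instead at the cycle
\[ C : a' - u_1 - \cdots - u_\ell - b' - c' - d' - a' \]
of even length $\ell + 4 \geq 6$; labelling its vertices in cycle order as $v_1, \ldots, v_{\ell+4}$, the pair $\{v_2, v_{\ell+3}\} = \{u_1, c'\}$ has index difference $\ell + 1$, which is odd, so $\{u_1, c'\}$ is an even-chord and Lemma~\ref{cycle}(i) forces it into $E(G')$ -- contradicting the neighborhood bound, since $u_1$ would then have two distinct neighbors $a', c'$ in $\{a,b,c,d\}$.

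The main obstacle is the even-$\ell$ case: Lemma~\ref{cycle}(i) is vacuous for $4$-cycles, so the natural closure of $Q$ by $\{a', b'\}$ (of length $\ell + 2$, possibly $4$) does not suffice in general. The idea that makes the argument uniform is to detour through the extra $K_4$-path $b' - c' - d' - a'$ of length $3$, producing for every even $\ell \geq 2$ an even cycle of length at least $6$ whose mandatory even-chord $\{u_1, c'\}$ directly collides with the neighborhood bound of the first step.
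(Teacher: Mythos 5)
Your proposal is correct and follows essentially the same route as the paper's proof: both extract a path between two $K_4$-vertices whose interior avoids the $K_4$, split on its parity, dispose of the odd case via two odd cycles meeting in a single vertex (Lemma~\ref{cycle}(ii)), and in the even case append a length-$3$ path inside the $K_4$ to obtain an even cycle of length at least $6$ whose forced even-chord (Lemma~\ref{cycle}(i)) again produces two triangles sharing one vertex. The only differences are organizational --- you invoke the Fan Lemma where the paper builds the path by hand, and you isolate the ``no outside vertex has two neighbors in the $K_4$'' observation as a preliminary step rather than deriving the same two-triangle contradiction inline.
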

 \begin{proof}
Assume that $G^{'} \supsetneq K_{4}$. 
Let $W=\{a,b,c,d\} \subset V(G)$ such that $G'_W= K_{4}$. 
Since $G^{'}$ is connected and $G' \neq G'_W$,
there exists $e \not\in W$ such that $\{a, e\} \in E(G^{'})$. 
Moreover since $G^{'}$ is 2-connected,
there exists a path $P: e \longrightarrow b$ that does not contain
the vertex $a$.
If $P$ contains either $c$ or $d$, then we replace $b$ by it.
Thus, we may assume that $P$ contains neither $c$ nor $d$.
Then $C= a \xrightarrow{} e \xrightarrow{P} b \xrightarrow{} a$ is
a cycle such that $c,d \notin V(C)$. 
 
 If $C$ is an odd cycle, then two odd cycles $(a, c, d)$ and $C$ have exactly one common vertex, a contradiction. 
 Hence $C$ is an even cycle. 
 Since $G^{'}$ has an even cycle 
$a \xrightarrow{} e \xrightarrow{P} b \xrightarrow{} d \xrightarrow{} c \xrightarrow{} a$
of length $\geq 6$,
we have $\{d, e\} \in E(G)$ by Lemma \ref{cycle} (i). 
 Hence two cycles $(a, b, c)$ and $(a, d, e)$ have exactly one common vertex, 
 but this is a contradiction. 
 Therefore we have $G^{'} = K_{4}$. 
 \end{proof}
\begin{xy}
 \ar@{} (0, 0); (70, 0) *++!D{a} *\dir<4pt>{*} = "A", 
 \ar@{-} "A"; (70, -24) *++!U{b} *\dir<4pt>{*} = "D", 
 \ar@{-} "D"; (60, -8) *++!R{c} *\dir<4pt>{*} = "F", 
 \ar@{-} "D"; (60, -16) *++!R{d} *\dir<4pt>{*} = "E", 
 \ar@{-} "A"; "E",
 \ar@{-} "A"; "F",
 \ar@{-} "E"; "F",
 \ar@{-} "A"; (80, -8) *++!L{e} *\dir<4pt>{*} = "B",
 \ar@{-} "D"; (80, -16) *++!L{ } *\dir<4pt>{*} = "C",
 \ar@{-} "B"; "C", 
 \ar@{} "D";(70, -32) *++!U{\mathrm{Figure}\ 1.},
\end{xy}

\begin{lem}\label{nonalmostbip}
If a graph $G$ is
strongly Koszul and not almost bipartite, then 
the complete graph $K_{4}$ is a subgraph of
$G$.
\end{lem}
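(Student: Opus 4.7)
The approach is to construct $K_4$ inside $G$ directly, anchoring at a triangle $T$ and, for each vertex of $T$, identifying a ``witness'' triangle supplied by the failure of almost-bipartiteness. First I would exhibit a triangle in $G$: since $K[G]$ is strongly Koszul it is Koszul, so $I_G$ is generated by quadratic binomials; by \cite[Theorem 1.2]{quadgene} every odd cycle of $G$ of length at least $5$ has a chord, and therefore the shortest odd cycle of $G$ is a triangle $T = (a,b,c)$.

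Since $G$ is not almost bipartite, for each $x \in \{a,b,c\}$ there exists an odd cycle of $G$ avoiding $x$; let $C_x$ be one of minimal length. Lemma \ref{cycle}(ii) applied to $T$ and $C_x$ gives $|V(T) \cap V(C_x)| \geq 2$, and since $x \notin V(C_x)$ this forces $V(T) \setminus \{x\} \subseteq V(C_x)$. Next I would argue that $|C_x| = 3$: if $|C_x| \geq 5$, the Koszul/quadratic discussion above supplies a chord of $C_x$, and this chord splits $C_x$ into two strictly shorter cycles whose vertex sets lie in $V(C_x) \subseteq V(G) \setminus \{x\}$, one of which is odd, contradicting the minimality of $|C_x|$. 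Consequently $C_a = (b,c,d_a)$, $C_b = (a,c,d_b)$, $C_c = (a,b,d_c)$ are triangles with $d_a, d_b, d_c \notin \{a,b,c\}$.

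Finally, Lemma \ref{cycle}(ii) applied to the pair $(C_a, C_b)$ forces them to share $c$ together with at least one more vertex; since $a \notin V(C_a)$ and $b \notin V(C_b)$, the only remaining possibility is $d_a = d_b$. The analogous comparisons with $C_c$ yield $d_a = d_b = d_c =: d$. Then $\{a,b,c,d\}$ carries all six edges --- $\{a,b\},\{b,c\},\{a,c\}$ from $T$; $\{b,d\},\{c,d\}$ from $C_a$; and $\{a,d\}$ from $C_b$ --- so $K_4 \subseteq G$. The main obstacle is the step showing $|C_x| = 3$: one must carefully observe that any chord of $C_x$ has both endpoints already in $V(C_x)$, so both of the strictly shorter cycles it produces still avoid $x$, which is precisely what is needed to contradict the minimality of $|C_x|$.
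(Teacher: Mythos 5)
Your endgame is correct and is essentially the paper's own final step in different clothing: the three witness triangles $C_a,C_b,C_c$ forced by Lemma \ref{cycle}(ii) to share a common apex $d$ correspond to the paper's $C_1=(1,2,3)$, $C_2=(1,2,4)$ and a third triangle avoiding the vertex $1$. The problem is the step you yourself flag as the main obstacle, namely that the shortest odd cycle of $G$ is a triangle (and, in the same breath, that each $C_x$ has length $3$). You justify this by asserting that, since $I_G$ is generated by quadratic binomials, ``every odd cycle of $G$ of length at least $5$ has a chord,'' citing \cite[Theorem 1.2]{quadgene}. That theorem says no such thing: its conditions concern even cycles of length $\geq 6$ (which must have an even-chord or a suitable configuration of odd-chords) and pairs of \emph{minimal odd cycles} (which must be joined by certain extra edges or bridges); it imposes no chord condition on a single odd cycle. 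Indeed the implication you want is false for strongly Koszul graphs in general: for $G=C_5$ the edge monomials are algebraically independent, so $I_G=(0)$ and $K[C_5]$ is a polynomial ring, hence strongly Koszul, yet $C_5$ is chordless. (Of course $C_5$ is almost bipartite, so it does not contradict the lemma --- but it shows that the chord claim cannot follow from quadratic generation alone, and at that point of your argument you have not yet used the failure of almost-bipartiteness.)

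The statement you need --- that under the combined hypotheses every chordless odd cycle has length $3$ --- is precisely the crux of the lemma, and the paper spends most of its proof on it: using Lemma \ref{cycle}(i) and (ii) one first shows that any two minimal odd cycles must share an edge and have the same length $\ell$; then, assuming $\ell\geq 5$, one invokes the failure of almost-bipartiteness twice to produce minimal odd cycles avoiding two consecutive vertices of a fixed minimal odd cycle, assembles from them a $6$-cycle, and applies Lemma \ref{cycle}(i) to obtain an even-chord contradicting minimality. Without an argument of this kind your proof assumes what is to be proved. The remainder of your write-up (the deduction $V(T)\setminus\{x\}\subseteq V(C_x)$, the splitting argument reducing $|C_x|$ to $3$ \emph{once a chord is known to exist}, and the identification $d_a=d_b=d_c$) is sound, so the gap is localized entirely in the chord claim.
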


\begin{proof}
Since $G$ is not almost bipartite, 
there exist at least two minimal odd cycles
$C_{1}$ and $C_{2}$.
By Lemma \ref{cycle} (ii), 
$\# \{V(C_{1}) \cap V(C_{2})\} \ge 2$.
Then $C_1$ is divided into several paths
of the form $P = (a_{0}, a_{1}, \ldots, a_{t - 1}, a_{t})$ in $C_{1}$
satisfying that
$a_{0}, a_{t} \in V(C_{2})$ and $a_{i} \not\in V(C_{2})$ $(1 \le i \le t - 1)$.

We will show that $1 \leq t \leq 2$.
Suppose that $t \geq 3$ and $t$ is odd.
Since $C_1$ is minimal, 
the length of a path $a_{t} \xrightarrow{odd\ in\ C_{2}} a_{0}$ 
is not one.
Thus, 
$C= a_{0} \xrightarrow{P} a_{t} \xrightarrow{odd\ in\ C_{2}} a_{0}$ 
is an even cycle of length $\geq 6$.
By Lemma \ref{cycle} (i),
$C$ has all possible even-chords.
This contradict that $C_{1}$ is minimal.
Suppose that $t \geq 4$ and $t$ is even.
Then $C'= a_{0} \xrightarrow{P} a_{t} \xrightarrow{even \ in\ C_{2}} a_{0}$ 
is an even cycle of length $\geq 6$.
By Lemma \ref{cycle} (i),
 $C'$ has all possible even-chords,
and hence either $C_1$ or $C_2$ has a chord.
This is a contradiction.

If $t=1$, then the length of a path $a_{t} \xrightarrow{odd\ in\ C_{2}} a_{0}$ 
is one since $C_2$ is minimal.
If $t=2$, then the length of a path $a_{t} \xrightarrow{even \ in\ C_{2}} a_{0}$ 
is two by the same argument as above.
Thus, $C_1$ and $C_2$ have a common edge
and the lengths of $C_1$ and $C_2$ are equal.
Let $\ell$ be the length of any minimal odd cycle of $G$.

Suppose that $\ell \geq 5$.
Let $C_1 = (1,2,\ldots,\ell)$ and
$\{2,3\}$ be an edge of $C_2$.
Since $G$ is not almost bipartite, there exists
a minimal odd cycle $C_3$ such that $2 \notin V(C_3)$.
Then $C_3$ has edges $\{1,i\}$ and $\{i,3\}$ such that $i \notin V(C_1)$.
Again, since $G$ is not almost bipartite, there exists
a minimal odd cycle $C_4$ such that $3 \notin V(C_4)$.
Then $C_4$ has edges $\{2,j\}$ and $\{j,4\}$ such that $j \notin V(C_1)$.
If $i =j$, then $G$ has a cycle $(2,3,i)$.
This contradicts that $\ell \geq 5$.
Hence, $i \neq j$.
It then follows that $(1,2,j,4,3,i)$ is a cycle of length 6.
By Lemma \ref{cycle} (i), this cycle has even-chord $\{1,4\}$.
This contradicts that $C_1$ is minimal.

Hence $\ell =3$.
Let $C_1 = (1,2,3)$ and $C_2 = (1,2,4)$ be cycles of $G$.
Since $G$ is not almost bipartite, there exists a cycle $C_3$ of 
length 3 with $1 \notin V(C_3)$.
By $\# \{V(C_{1}) \cap V(C_{3})\} \ge 2$
and
$\# \{V(C_{2}) \cap V(C_{3})\} \ge 2$,
we have $C_3 = (2,3,4)$.
Thus, $C_1 \cup C_2 \cup C_3 =K_{4} $, as desired. 
\end{proof}

We now generalize Proposition \ref{bpt} to an arbitrary graph.
The main theorem of this paper is as follows.

\begin{thm}
\label{main}
Let $G$ be a connected simple graph
and let $G_1,\ldots,G_s$ be the 
$2$-connected components of $G$.
Then $K[G]$ is strongly Koszul if and only if 
by a permutation of the indices,
$G_1,\ldots,G_{s-1}$ are complete bipartite and $G_s$ is 
of the following types:
\begin{enumerate}
 \item[{\rm (i)}] A complete bipartite.
\item[{\rm (ii)}] 
An almost bipartite graph such that 
each $2$-connected component of $G_s(v,V_1,V_2)$ is complete bipartite
where $V_1 \cup V_2$ is a bipartition of 
 the vertex set of the bipartite graph
$({G_s})_{[n] \setminus v}$.

\item[{\rm (iii)}] The complete graph $K_{4}$. 
\end{enumerate}
\end{thm}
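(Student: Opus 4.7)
My plan is to prove both directions separately, with the sufficiency of case (iii) as the main technical obstacle.

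For necessity, I would assume $K[G]$ is strongly Koszul and apply Lemma \ref{2connectedcomp} to reduce to at most one $2$-connected component $G_s$ that is not complete bipartite. Since $G_s$ is a block, the induced subgraph on $V(G_s)$ equals $G_s$, so $K[G_s]$ is a combinatorial pure subring of $K[G]$ and hence is strongly Koszul by Proposition \ref{cpurecor}. I would then split into three subcases according to the structure of $G_s$: if $G_s$ is bipartite, Proposition \ref{bpt} forces it to be complete bipartite, giving case (i); if $G_s$ is almost bipartite and non-bipartite with exceptional vertex $v$, Proposition \ref{almostbipartite} yields $K[G_s] \simeq K[G_s(v,V_1,V_2)]$, whose blocks are complete bipartite by Proposition \ref{bpt}, giving case (ii); otherwise Lemma \ref{nonalmostbip} combined with Lemma \ref{K4itself} forces $G_s = K_4$, which is case (iii).

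For sufficiency in case (i), the bipartitions of the complete bipartite $G_i$ can be glued consistently at each cut vertex (blocks of $G$ share at most one vertex), so $G$ itself is bipartite and Proposition \ref{bpt} applies. For case (ii), since $G_1,\ldots,G_{s-1}$ are bipartite, every odd cycle of $G$ lies in $G_s$ and therefore passes through $v$; thus $G$ itself is almost bipartite with respect to $v$. Applying Proposition \ref{almostbipartite} to $G$ yields $K[G] \simeq K[G(v,V_1',V_2')]$, and a block-by-block check shows that every block of $G(v,V_1',V_2')$ is complete bipartite: the blocks inside $G_s$ produce the blocks of $G_s(v,V_1,V_2)$ which are complete bipartite by hypothesis, while each bipartite block $G_i$ of $G$ is transformed into a complete bipartite block of $G(v,V_1',V_2')$ with $v$ possibly renamed to $n+1$, owing to the compatibility of its bipartition with the global bipartition of $G_{[n]\setminus v}$. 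Proposition \ref{bpt} then finishes the case.

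The main obstacle is case (iii), where $G_s = K_4$ is attached to complete bipartite blocks via cut vertices. Here a reduction to the bipartite setting is impossible, since the four triangles of $K_4$ share no common vertex and so $G$ is not almost bipartite. My plan is a direct verification of the strong Koszul condition using the generating set $\{X^e : e \in E(G)\}$, ordered so that the edges of $K_4$ appear last. The strong Koszulness of $K[K_4] = R_{4,2}$, recorded in Proposition \ref{squarefreeVeronese}, controls colon ideals among the $K_4$-edges, while the bipartite case \cite[Theorem 4.5]{HeHiR} handles colon ideals within each complete bipartite block. The essential technical work is the cross-term analysis: for a colon ideal mixing a $K_4$-edge and a bipartite-block edge, one needs to show that any witnessing monomial corresponds to an even walk in $G$ that is forced to factor through the cut vertex, and is therefore a multiple of some edge monomial. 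I expect this cross-term bookkeeping to be the principal difficulty of the proof.
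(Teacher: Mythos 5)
Your necessity argument is exactly the paper's: Lemma \ref{2connectedcomp} isolates the one possibly non-complete-bipartite block $G_s$, which is strongly Koszul as a combinatorial pure subring, and the trichotomy bipartite / almost bipartite / neither is resolved by Proposition \ref{bpt}, Propositions \ref{almostbipartite}--\ref{bpt}, and Lemmas \ref{nonalmostbip}--\ref{K4itself} respectively. Your sufficiency argument for cases (i) and (ii) is a correct variant: instead of reducing to $G_s$ you observe that $G$ itself is bipartite (resp.\ almost bipartite with respect to $v$) and apply Proposition \ref{bpt} (resp.\ Proposition \ref{almostbipartite} followed by Proposition \ref{bpt}) to the whole graph; the block-by-block compatibility check you describe does go through because each complete bipartite block meeting $v$ is $2$-connected, so its neighbours of $v$ all land in the same part of the global bipartition.

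The genuine gap is case (iii) of sufficiency, which you flag as ``the principal difficulty'' and leave as a plan rather than a proof. The cross-term colon-ideal analysis you propose is both uncarried-out and unnecessary, because you have missed the structural fact that makes this case (and, in the paper, all three cases) immediate: since $G_1,\ldots,G_{s-1}$ are bipartite, every even cycle of $G$ lies in a single block and no pair of odd cycles lies in two distinct blocks, so every primitive even closed walk of $G$ uses edges of only one block. Hence $I_G=\sum_{i=1}^{s} I_{G_i}$ in disjoint sets of variables and $K[G]\cong K[G_1]\otimes_K\cdots\otimes_K K[G_s]$. Since tensor products preserve strong Koszulness \cite{HeHiR}, it suffices that each $K[G_i]$ be strongly Koszul, which for $i<s$ holds because complete bipartite blocks give trivial (Segre product) rings and for $G_s=K_4$ holds by Proposition \ref{squarefreeVeronese}. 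This is precisely the paper's one-line reduction ``it is enough to show that $K[G_s]$ is strongly Koszul''; without it, or without actually executing your direct verification of the colon-ideal condition across blocks, your proof of the ``if'' direction in case (iii) is incomplete. (Note also that your stated reason why a bipartite reduction fails --- that $K_4$ is not almost bipartite --- is true but beside the point: the needed reduction is along the block decomposition, not along a vertex splitting.)
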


\begin{proof}
(``If")
It is enough to show that $K[G_s]$ is strongly Koszul.
If $G_s$ satisfies one of (i) and (ii), then $K[G_s]$ is trivial
and hence, in particular, strongly Koszul.
Suppose that $G_s$ satisfies (iii).
By Proposition \ref{squarefreeVeronese}, $R_{4, 2} = K[K_{4}]$ is strongly Koszul.

(``Only if")
Suppose that $K[G]$ is strongly Koszul and
has 2-connected components $G_1,\ldots,G_s$.
By Lemma \ref{2connectedcomp}, 
we may assume that $G_1,\ldots,G_{s-1}$ is complete bipartite.
Suppose that $G_s \neq K_4$ and $G_s$ is not complete bipartite.
If $G_s$ is not almost bipartite, then 
$K_4$ is a subgraph of $G_s$ 
by Lemma \ref{nonalmostbip}.
Then by Lemma \ref{K4itself},
$G_s = K_4$, a contradiction.
Thus, $G_s$ is almost bipartite.
Then by Proposition \ref{almostbipartite},
the bipartite graph $G_s(v,V_1,V_2)$ satisfies $K[G_s] \simeq 
K[G_s(v,V_1,V_2)]$.
By Proposition \ref{bpt}, 
any 2-connected component of $G_s(v,V_1,V_2)$ is complete bipartite.
Therefore, $G_s$ satisfies condition (ii), as desired.
\end{proof}

Since a trivial edge ring is strongly Koszul,
such an edge ring satisfies one of the conditions
in Theorem \ref{main}.
As stated in Remark \ref{remark}, 
$K[K_4] = R_{4,2}$ is nontrivial. 
Hence we have the following.

\begin{cor}\label{trivial}
Let $G$ be a connected simple graph
and let $G_1,\ldots,G_s$ be the 
$2$-connected components of $G$.
Then $K[G]$ is trivial if and only if 
by a permutation of the indices,
$G_1,\ldots,G_{s-1}$ are complete bipartite and $G_s$ is 
of the following types:
\begin{enumerate}
 \item[{\rm (i)}] A complete bipartite.
\item[{\rm (ii)}] 
An almost bipartite graph such that 
each $2$-connected component of $G_s(v,V_1,V_2)$ is complete bipartite
where $V_1 \cup V_2$ is a bipartition of 
 the vertex set of the bipartite graph
$({G_s})_{[n] \setminus v}$.
\end{enumerate}
\end{cor}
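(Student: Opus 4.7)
The plan is to mirror the proof of Theorem~\ref{main}, replacing ``strongly Koszul'' by ``trivial'' throughout and invoking Remark~\ref{remark} to separate case~(iii) from cases~(i) and~(ii). Since every trivial semigroup ring is strongly Koszul, the classification of Theorem~\ref{main} already applies to any $G$ with $K[G]$ trivial, and it remains only to check that types~(i) and~(ii) always yield trivial $K[G]$ while type~(iii) never does.

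For the ``if'' direction, I would argue exactly as in the if-direction of Theorem~\ref{main}. In case~(i), $K[G_s]$ is a Segre product of two polynomial rings, hence trivial. In case~(ii), Proposition~\ref{almostbipartite} gives $K[G_s] \cong K[G_s(v,V_1,V_2)]$, where $G_s(v,V_1,V_2)$ is bipartite with every 2-connected component complete bipartite; the same Segre decomposition shows this edge ring is trivial. Since each $G_i$ with $i<s$ is complete bipartite, each $K[G_i]$ is trivial as well. The edge ring $K[G]$ is then assembled from $K[G_1],\ldots,K[G_s]$ along cut vertices, and this iterative gluing preserves triviality.

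For the ``only if'' direction, assume $K[G]$ is trivial. Then $K[G]$ is strongly Koszul, so Theorem~\ref{main} forces $G_1,\ldots,G_{s-1}$ to be complete bipartite and $G_s$ to be of type~(i), (ii), or (iii). Suppose toward a contradiction that $G_s = K_4$. Since each 2-connected component is an induced subgraph, $K[K_4] = K[G_s]$ sits inside $K[G]$ as a combinatorial pure subring. Triviality descends to combinatorial pure subrings, via the Hibi-ring realization of trivial rings used in Remark~\ref{remark}, so $K[K_4]$ would be forced to be trivial. This contradicts Remark~\ref{remark}, which asserts $R_{4,2}=K[K_4]$ is non-trivial.

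The main obstacle is verifying the two unstated preservation properties: that triviality is inherited by combinatorial pure subrings (the only-if step) and that gluing trivial rings along cut vertices is trivial (the if-step). Both reduce to combinatorial operations on the Hibi posets underlying trivial rings, parallel to the role played by Proposition~\ref{cpurecor} in the strongly Koszul setting, and this descent property is the one place where the argument requires more than a straightforward specialization of Theorem~\ref{main}.
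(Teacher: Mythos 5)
Your proposal matches the paper's own argument: the paper likewise deduces the ``only if'' direction from Theorem \ref{main} via the implication trivial $\Rightarrow$ strongly Koszul, excludes case (iii) by the non-triviality of $K[K_4]=R_{4,2}$ recorded in Remark \ref{remark}, and for the ``if'' direction reuses the observation from the proof of Theorem \ref{main} that $G_s$ of type (i) or (ii) has trivial $K[G_s]$. The two preservation properties you flag (that triviality passes between $K[G]$ and its blocks, e.g.\ to the combinatorial pure subring $K[G_s]$) are left just as implicit in the paper's two-sentence proof, so your write-up follows the same route and is merely more candid about where the unstated details lie.
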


It is known (e.g., \cite{Stu}) that
the toric ideals of both $K_4$ and a complete bipartite graph
have a quadratic Gr\"{o}bner basis.
We have the following from Theorem \ref{main}.

\begin{cor}\label{gbasis}
If $K[G]$ is strongly Koszul, then the toric ideal $I_{G}$ 
of $K[G]$ possesses a quadratic Gr\"{o}bner basis. 
\end{cor}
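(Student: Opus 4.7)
The plan is to combine Theorem \ref{main}, which determines the precise structure of $G$, with the known fact that the toric ideals of $K_4$ and of complete bipartite graphs each admit a quadratic Gr\"obner basis. By Theorem \ref{main}, after permuting indices, the $2$-connected components $G_1,\ldots,G_{s-1}$ are complete bipartite, and $G_s$ is complete bipartite, almost bipartite of the specified form, or $K_4$. In cases (i) and (iii) the component $G_s$ is already of a type whose toric ideal has a quadratic Gr\"obner basis. In case (ii) I would apply Proposition \ref{almostbipartite}, which gives $K[G_s] \cong K[G_s(v,V_1,V_2)]$, so it suffices to handle the bipartite graph $G_s(v,V_1,V_2)$; by Theorem \ref{main} its $2$-connected components are complete bipartite, so the same reduction we make for $G$ itself will apply to it.

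The heart of the argument is the gluing step: if $G$ is a connected simple graph whose $2$-connected components are $H_1,\ldots,H_s$, and each $I_{H_i}$ has a quadratic Gr\"obner basis with respect to some monomial order $<_i$, then $I_G$ has a quadratic Gr\"obner basis. I would prove this in two steps. First, since distinct $2$-connected components share at most a single cut vertex, the variable sets of the polynomial rings $K[Y_{H_i}]$ corresponding to the edge sets $E(H_i)$ are pairwise disjoint. Hence we may choose a block monomial order $<$ on $K[Y]$ that refines each $<_i$ and whose blocks correspond to the $E(H_i)$. Second, I would argue that $I_G$ is generated by $\bigcup_{i=1}^s I_{H_i}$: any binomial in $I_G$ corresponds to an equality of monomials in $K[X]$, which comes from an even closed walk; such a walk, whenever it crosses a cut vertex, splits into even closed subwalks lying entirely inside individual $2$-connected components, so the corresponding binomial lies in the sum of the ideals $I_{H_i}$.

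With these two observations in hand, the $S$-pair criterion trivializes across different blocks: an $S$-pair between elements of $I_{H_i}$ and $I_{H_j}$ with $i \ne j$ involves disjoint sets of variables and so reduces to zero immediately, while $S$-pairs within a single block reduce to zero by hypothesis. Consequently $\bigcup_{i=1}^s \mathcal{G}_i$ is a quadratic Gr\"obner basis of $I_G$, where $\mathcal{G}_i$ is the given quadratic Gr\"obner basis of $I_{H_i}$.

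The main obstacle is the gluing assertion that $I_G = \sum_i I_{H_i}$, since a priori a binomial in $I_G$ could mix edges from several components in an essential way. The key point is that any closed walk in $G$ must pass through a cut vertex an even number of times and thus decomposes into closed subwalks, each supported in a single $2$-connected component; an even closed walk decomposes into even closed subwalks after adjusting parities using pairs of passages through cut vertices. Once this decomposition is in place, the block-order argument handles the $S$-pair check with no further calculation, and Corollary \ref{gbasis} follows.
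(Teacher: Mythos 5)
Your overall strategy is the same as the paper's (which derives this corollary in one line from Theorem \ref{main} together with the known quadratic Gr\"obner bases for $K_4$ and for complete bipartite graphs), and you are right that the real content lies in the gluing step across $2$-connected components, which the paper leaves implicit. However, the gluing lemma is false in the generality in which you state it. Take $G$ to be two triangles $(1,2,3)$ and $(3,4,5)$ sharing the vertex $3$. Each block is a triangle, whose edge ring is a polynomial ring, so $I_{H_1}=I_{H_2}=0$; yet the even closed walk $1\to 2\to 3\to 4\to 5\to 3\to 1$ yields the cubic binomial $Y_{12}Y_{34}Y_{35}-Y_{23}Y_{45}Y_{13}\in I_G$, which is an indispensable generator. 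Hence $I_G\neq\sum_i I_{H_i}$ in general, and your parity-adjustment argument cannot succeed: the walk above splits at the cut vertex into two \emph{odd} closed subwalks, and no rearrangement makes them even. This is precisely the ``two odd cycles meeting in one vertex'' phenomenon that Lemma \ref{cycle} (ii) excludes for strongly Koszul edge rings, so your intermediate claim is using more than you have proved.

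The gap is repairable, but only by invoking the structure supplied by Theorem \ref{main}: all blocks except possibly $G_s$ are (complete) bipartite, so every odd cycle of $G$ lies in the single block $G_s$. Under this hypothesis one can prove $I_G=\sum_i I_{G_i}$, for instance by the following degree count. Write a binomial $u-v\in I_G$ as $u=u'u''$, $v=v'v''$ with $u',v'$ supported on the edges of a bipartite block $G'$ attached at a cut vertex $c$ and $u'',v''$ on the remaining edges; summing the $X$-degrees over one side of the bipartition of $G'$ forces $\deg u'=\deg v'$, hence $\deg_{X_c}\pi(u')=\deg_{X_c}\pi(v')$, hence $\pi(u')=\pi(v')$ and $\pi(u'')=\pi(v'')$, so $u-v=u''(u'-v')+v'(u''-v'')$. (Without bipartiteness of $G'$ the degrees $\deg u'$ and $\deg v'$ may differ, as in the two-triangle example.) With this corrected decomposition your block-order and $S$-pair argument goes through, and the remaining reductions you propose --- in particular transporting case (ii) through Proposition \ref{almostbipartite}, whose isomorphism is induced by a bijection of edge sets and therefore identifies the toric ideals --- are fine.
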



${\mb{Acknowledgement.}}$ 
This research was supported by the JST (Japan Science and Technology Agency)
CREST (Core Research for Evolutional Science and Technology) research project
Harmony of Gr\"{o}bner Bases and the Modern Industrial Society in the framework of the
JST Mathematics Program ``Alliance for Breakthrough between Mathematics and
Sciences."



\begin{thebibliography}{99}

\bibitem{BHeV}
{W. Bruns, J. Herzog and U. Vetter}, 
Syzygies and walks, 
{\em in} "Commutative Algebra" (A. Simis, N. V. Trung and G. Valla, eds.), World Scientific, Singapore, 
(1994), 36--57.

\bibitem{Conca}
A. Conca, E. De Negri and M. E. Rossi, Koszul algebras and regularity,
{\em in} ``Commutative Algebra:
expository papers dedicated to David Eisenbud on the occasion of his 65th birthday,"
(I. Peeva ed.), Springer (2013), 285--315.



\bibitem{HeHiR}
{J. Herzog, T. Hibi and G. Restuccia}, 
Strongly Koszul algebras, 
{\em Math. Scand.}, 
\textbf{86} (2000), 161--178. 

\bibitem{Hib}
{T. Hibi}, 
Distributive lattices, affine semigroup rings and algebras with straightening laws, 
{\em in} ``Commutative Algebra and Combinatorics" (M. Nagata and H. Matsumura eds.)
Adv. Stud. Pure Math. 11, North Holland, Amsterdam, 
(1987), 93--109.

\bibitem{M}
{K. Matsuda}, 
Strong Koszulness of toric rings associated with stable set polytopes of trivially perfect graphs, 
{\em J. Algebra Appl.}, in press.
(DOI: 10.1142/S0219498813501387)

\bibitem{OHeHi}
{H. Ohsugi, J. Herzog and T. Hibi}, 
Combinatorial pure subrings, 
{\em Osaka J. Math.}, 
\textbf{37} (2000), 745--757. 


\bibitem{normal}
{H. Ohsugi and T. Hibi}, 
Normal polytopes arising from finite graphs, 
{\em J. Algebra}, 
\textbf{207} (1998), 409--426. 


\bibitem{quadgene}
{H. Ohsugi and T. Hibi}, 
Toric ideals generated by quadratic binomials, 
{\em J. Algebra}, 
\textbf{218} (1999), 509--527. 


\bibitem{Koszulbipartite}
{H. Ohsugi and T. Hibi}, 
Koszul bipartite graphs, 
{\em Adv. in Appl. Math.}, 
\textbf{22} (1999), 25--28. 

\bibitem{multipartite}
{H. Ohsugi and T. Hibi}, 
Compressed polytopes, initial ideals and complete multipartite graphs,
{\em Illinois J. Math.} {\bf 44} (2000), 391--406. 

\bibitem{csc}
H. Ohsugi and T. Hibi,
Centrally symmetric configurations of integer matrices,
{\em Nagoya Math. J.} (2013), to appear.




\bibitem{Stu}
B. Sturmfels, ``Gr\"obner bases and convex polytopes,"
Amer. Math. Soc., Providence, RI, 1996.

\end{thebibliography}
\end{document}